\newtheorem{theorem}{Theorem}[section]
\newtheorem{lemma}[theorem]{Lemma}
\newtheorem{corollary}[theorem]{Corollary}
\newtheorem{definition}[theorem]{Definition}
\newtheorem{remark}[theorem]{\it Remark}
\newtheorem{example}[theorem]{Example}
\newtheorem{proposition}[theorem]{Proposition}
\def\C{\mathbb{C}}
\def\R{\mathbb{R}}
\def\ql{\backslash \! \backslash}
\title{Valuations from representation theory and tropical geometry}
\author{Christopher Manon}
\thanks{This work was supported by the NSF fellowship DMS-0902710}
\begin{document}

\begin{abstract}
We recall the space of seminorms discussed by Payne in \cite{P} and define
a slight modification, the space of graded valuations.  After explaining how these spaces relate
to tropical geometry, we describe examples of valuations 
which come from the representation theory of reductive groups.  
\end{abstract}

\maketitle

\tableofcontents

\smallskip

\section{Introduction}

 This note is written with intention of creating a context for recent
results of the author \cite{M1}, \cite{M2}, involving a set of valuations of certain commutative algebras related to the representation theory of a reductive group.  These valuations are intrinsic to the algebra, but were shown to define a point in the tropical variety of the associated ideal of any presentation. We explain this by viewing these objects as generalizations of logarithms of the multiplicative seminorms of Berkovich, also studied recently by Payne, \cite{P}, we will call a logarithm of a seminorm a generalized valuation.  Roughly, the set of generalized valuations $\mathcal{F}_A$ ( $(Spec(A)_{an}$ in \cite{P}) for an algebra $A$ has the property that for any presentation of a subalgebra
  $$
  \begin{CD}
  0 @>>> I @>>> K[X] @>\phi>> A\\
  \end{CD}
  $$
  we obtain a map $\hat{\phi}: \mathcal{F}_A \to tr(I)$ to the tropical variety of the ideal $I,$ this map is continuous with respect to a natural topology.   We view the space $\mathcal{F}_A$ as a step in the direction of creating a theory of tropical geometry which is intrinsic to an algebra or scheme, and is not dependent upon embedding information.   We will also define a slight generalization of the space of generalized valuations, the space of graded valuations $\mathcal{F}_A^s$ for a grading $A = \bigoplus A_s$ of the underlying vector space of the algebra $A.$ This set up appears naturally in several contexts, we will use it to study algebras related to the representation theory of reductive groups, but any context where algebras are multigraded or multifiltered by combinatorial data ought to see an application.  The second part of this note is devoted to showing how to construct valuations from representation theoretic data attached to a ring $A.$  Graded valuations are not always necessary in our examples,  but we will see that it is much easier to prove that a given function is a graded valuation, and that this distinction is almost as good for applications.  We should also mention that rings with other types of extra data have natural graded valuations, see for example \cite{LM} and \cite{KKh} for constructions for rings of global sections of line bundles on projective varieties. 

  \section{Definitions}

In this section we will define the space of generalized valuations and the space
of graded valuations. 

  \begin{definition}
  Let $A$ be an algebra.
  We call $v: A \to \mathbb{R}$ a valuation if the following conditions are satisfied.
  \begin{enumerate}
  \item $v(0) =  -\infty$\\
  \item For any two elements $a, b \in A$ we have $v(a + b) \leq max\{v(a), v(b)\}.$\\
  \item For any two elements $a, b, \in A$ we have $v(ab) = v(a) + v(b)$\\
  \end{enumerate}
  \end{definition}
\noindent
This is the logarithm of the definition of a multiplicative seminorm.
  Using the fact that $\R\cup \{ -\infty\}$ is a tropical algebra with $max = \oplus$ and $+ = \otimes$,  we can reformulate these conditions as follows.
  \begin{enumerate}
  \item $v(0)$ is the additive identity\\
  \item $v(a + b) \oplus v(a) \oplus v(b) = v(a) \oplus v(b)$\\
  \item $v(ab) = v(a) \otimes v(b)$\\
  \end{enumerate}
\noindent
Throughout this note we will move back and forth between the tropical notation and the
classical notation. The set of all such valuations is referred to as $\mathcal{F}_A$ from now on. The requirement that $v(a + b)$ only be less than or equal to $v(a) \oplus v(b)$ may look a little awkward, this is because highest terms can cancel in a sum, we express this with the following proposition.

\begin{proposition}\label{P1}
Let $v$ be a valuation for $A,$ and suppose $v(a+b) \neq v(a) \oplus v(b)$
then $v(a) = v(b).$
\end{proposition}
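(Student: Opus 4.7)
The plan is to argue by contradiction: assuming the conclusion fails, I would show that strict inequality in axiom (2) cannot occur. So suppose $v(a) \neq v(b)$ and, without loss of generality, $v(a) > v(b)$, so that $v(a) \oplus v(b) = v(a)$. The goal is then to prove $v(a+b) = v(a)$, contradicting the hypothesis $v(a+b) \neq v(a) \oplus v(b)$.

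The upper bound $v(a+b) \leq v(a)$ is immediate from axiom (2). For the matching lower bound, I want to write $a = (a+b) + (-b)$ and apply axiom (2) again to get $v(a) \leq \max\{v(a+b), v(-b)\}$, so the main preparatory step is to show $v(-b) = v(b)$. For this I first note that axiom (3) forces $v(1) = v(1 \cdot 1) = 2v(1)$, so either $v(1) = 0$ or $v(1) = -\infty$; in the latter degenerate case $v(a) = v(1 \cdot a) = -\infty$ for every $a$ and the proposition holds vacuously. Assuming $v(1) = 0$, the same trick applied to $(-1)(-1) = 1$ gives $2v(-1) = 0$, hence $v(-1) = 0$, and consequently $v(-b) = v((-1)b) = v(b)$ by the multiplicativity axiom.

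Plugging this in, $v(a) \leq \max\{v(a+b), v(b)\}$. Since we assumed $v(a) > v(b)$, the maximum on the right cannot be achieved by $v(b)$, so $v(a) \leq v(a+b)$. Combined with the earlier upper bound $v(a+b) \leq v(a)$ this gives $v(a+b) = v(a) = v(a) \oplus v(b)$, contradicting the assumption that equality fails. Hence $v(a) = v(b)$.

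The only real subtlety is the handling of $v(-1)$, since the axioms are stated for a general algebra and one must verify that additive inverses do not cause trouble with the $-\infty$ value. Once the degenerate trivial case is isolated, the argument is the familiar non-archimedean/ultrametric proof translated into the tropical $(\oplus, \otimes)$ language of the paper.
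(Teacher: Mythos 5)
Your proof is correct and follows essentially the same route as the paper's: both rest on the decomposition $a = (a+b) - b$ and a second application of the ultrametric inequality to force $v(a) \leq v(a+b)$, yielding a contradiction. The only difference is that you explicitly justify $v(-b) = v(b)$ via $v(-1) = 0$ (and isolate the degenerate case $v(1) = -\infty$), a step the paper uses silently.
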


\begin{proof}
Suppose $v(a) = v(a) \oplus v(b)$ then we have 

\begin{equation}
v(a) = v(a + b - b) \leq v(a+b) \oplus v(b)\\
\end{equation}
\noindent
If $v(a + b) \oplus v(b) = v(a + b),$ then we must have

\begin{equation}
v(a+b) < v(a) \leq v(a + b)\\
\end{equation}

\noindent
which is a contradiction.  Therefore $v(b) = v(a) \oplus v(b) = v(a).$
\end{proof}

\subsection{Cone structures on subsets of $\mathcal{F}_A$}

\begin{definition}
  For $v, w \in \mathcal{F}_A$ we write $v \Rightarrow w$ if $v(a) \leq v(b)$ implies
  $w(a) \leq w(b).$   Tropically this
  is $v(a) \oplus v(b) = v(b)$ implies $w(a) \oplus w(b) = w(b).$
\end{definition}

We say $v$ and $w$ are on the same facet if $v \Rightarrow w$ and $w \Rightarrow v,$ in which case we write $v \Longleftrightarrow w.$  Note that for any positive real number $R \in \mathbb{R},$ $Rv$ is valuation and we have $v \Longleftrightarrow Rv.$  The relation $\Rightarrow$ is transitive, reflexive, and $\Longleftrightarrow$ defines an equivalence relation.

\begin{proposition}
If $v \Rightarrow w_1, w_2$ in $\mathcal{F}_A$ then the function
$w_1 + w_2 : A \to \mathbb{R}_{\geq 0}$ defines a valuation with $v \Rightarrow w_1 + w_2$.
\end{proposition}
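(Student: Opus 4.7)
The plan is to verify the three valuation axioms for $w_1 + w_2$ in order, and then separately verify the implication $v \Rightarrow w_1 + w_2$. The first axiom is immediate: $(w_1 + w_2)(0) = w_1(0) + w_2(0) = -\infty$ under the standard conventions. The multiplicativity axiom is also essentially free: for any $a, b \in A$,
\begin{equation}
(w_1+w_2)(ab) = w_1(ab) + w_2(ab) = (w_1(a)+w_1(b)) + (w_2(a)+w_2(b)) = (w_1+w_2)(a) + (w_1+w_2)(b).
\end{equation}

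The content lies in the subadditivity axiom, and this is where the hypothesis $v \Rightarrow w_i$ must be used. The naive estimate $w_i(a+b) \leq w_i(a) \oplus w_i(b)$ for $i=1,2$ cannot be added directly, because the maximum on the right could be attained at different arguments for $i=1$ and $i=2$. To get around this, I will split into cases based on the value of $v$: since $v(a), v(b) \in \R \cup \{-\infty\}$ are totally ordered, we may assume without loss of generality that $v(a) \leq v(b)$. The hypothesis $v \Rightarrow w_i$ then forces $w_i(a) \leq w_i(b)$ for both $i = 1, 2$, so $w_i(a) \oplus w_i(b) = w_i(b)$ and consequently $w_i(a+b) \leq w_i(b)$. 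Adding the two inequalities gives
\begin{equation}
(w_1+w_2)(a+b) \leq w_1(b) + w_2(b) = (w_1+w_2)(b) \leq \max\{(w_1+w_2)(a),(w_1+w_2)(b)\},
\end{equation}
which is the required estimate.

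Finally, for the implication $v \Rightarrow w_1 + w_2$: suppose $v(a) \leq v(b)$. By hypothesis $w_1(a) \leq w_1(b)$ and $w_2(a) \leq w_2(b)$, and summing these real inequalities yields $(w_1+w_2)(a) \leq (w_1+w_2)(b)$, as desired. The principal obstacle is the subadditivity step, and the crucial observation is that although $\mathcal{F}_A$ is not closed under arbitrary sums of its elements, the relation $v \Rightarrow w_i$ enforces enough compatibility between the orderings induced by $w_1$ and $w_2$ that they agree on which of $a, b$ is the dominant term, which is precisely what allows the two subadditivity inequalities to be added.
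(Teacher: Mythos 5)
Your proof is correct and follows essentially the same route as the paper: the whole point in both arguments is that the hypothesis $v \Rightarrow w_i$ forces $w_1$ and $w_2$ to agree on which of $a$, $b$ is dominant, so the two subadditivity estimates can be added. The only (cosmetic) difference is that you condition directly on the total order of $v(a)$, $v(b)$, whereas the paper cases on $w_1(a)$ versus $w_1(b)$ and passes through the contrapositive of $v \Rightarrow w_1$ to reach the same conclusion; your organization is slightly cleaner.
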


\begin{proof}
First we will check the three conditions defining valuations.  The first and third
  conditions follow trivially.  For the second condition, note that we have

  \begin{equation}
  w_1 + w_2(a + b) = w_1(a+b) + w_2(a + b) \leq max\{w_1(a), w_1(b)\} + max\{w_2(a), w_2(b)\}\\ 
  \end{equation}
  
\noindent
  Suppose that $w_1(a) < w_1(b)$ then $v(a) < v(b)$ and we must have $w_2(a) \leq w_2(b),$
  in which case $max\{w_1(a), w_1(b)\} + max\{w_2(a), w_2(b)\} = w_1(b) + w_2(b).$  Similarly
  if $w_1(a) = w_1(b)$ and $w_2(a) < w_2(b)$ we still have $max\{w_1(a), w_1(b)\} + max\{w_2(a), w_2(b)\} = w_1(b) + w_2(b).$ Either way we can write

  \begin{equation}
  w_1 + w_2(a + b) \leq max\{w_1(a), w_1(b)\} + max\{w_2(a), w_2(b)\} \leq \\
\end{equation}
\begin{equation}
max\{w_1(a) + w_2(a), w_1(b) + w_2(b)\} = max\{w_1 + w_2(a), w_1 + w_2(b)\},\\
  \end{equation}
\noindent  
this proves that $w_1 + w_2$ is a valuation. Now if $v(a) = v(b)$ then $w_1(a) = w_1(b)$ and $w_2(a) = w_2(b).$  Similarly if $v(a) < v(b)$ then $w_1(a) \leq w_1(b)$ and $w_2(a) \leq w_2(b).$ This allows us to conclude
  that $v \Rightarrow w_1 + w_2.$
\end{proof}

For a given $v \in \mathcal{F}_A$ the set of all $w$ such that $v \Rightarrow w$ forms a cone in this way. It is of course possible for $v_1 \Rightarrow w$ and $v_2 \Rightarrow w$ for incomparable $v_1$ and $v_2,$ in this way we view $\mathcal{F}_A$ as a complex of cones glued along common boundary cones. We can even give these cones an integral structure by looking at the distinguished valuations $v: A \to \R$ such that $v(a) \in \mathbb{Z}.$

\begin{definition}
For $v, w \in \mathcal{F}_A$ we say $v \rightarrow w$ if for any 
$\{x_1, \ldots, x_n\} = X \subset A$ with associated ideal $I \subset \mathbb{Z}[X]$ in the symbols $X,$ we have $in_v(in_w(I)) = in_v(I).$ 
\end{definition}

\begin{lemma}
If $v \Rightarrow w$ then $v \rightarrow w.$
  \end{lemma}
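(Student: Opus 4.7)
The plan is to first establish the single-polynomial identity $in_v(in_w(f)) = in_v(f)$ for every $f \in \mathbb{Z}[X]$, and then to promote it to an identity of ideals via a lifting argument.

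For the single-polynomial step, valuation multiplicativity gives $v(x^\alpha) = \sum \alpha_i v(x_i)$ and analogously for $w$, so $v$ and $w$ act as weight functions on monomials in the symbols $x_1,\dots,x_n$. Applying $v \Rightarrow w$ to the elements $x^\alpha, x^\beta \in A$ shows that any monomial attaining the maximal $v$-weight on the support of $f$ also attains the maximal $w$-weight, so the support of $in_v(f)$ is contained in the support of $in_w(f)$. The maximum $v$-weight on the support of $in_w(f)$ therefore coincides with the maximum $v$-weight on the support of $f$, and is attained precisely by the monomials of $in_v(f)$; selecting these within $in_w(f)$ returns exactly $in_v(f)$, giving $in_v(in_w(f)) = in_v(f)$.

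The inclusion $in_v(I) \subseteq in_v(in_w(I))$ is then immediate: for each $f \in I$, $in_v(f) = in_v(in_w(f))$ is the $v$-initial form of the element $in_w(f) \in in_w(I)$, so it lies in $in_v(in_w(I))$. For the reverse inclusion I would use that $in_w(I)$ is graded by $w$-degree, being generated by the $w$-homogeneous polynomials $in_w(f_i)$, so every element decomposes into $w$-homogeneous pieces still in $in_w(I)$. A $w$-homogeneous element can be written $g = \sum_i h_i \cdot in_w(f_i)$ with each $h_i$ itself $w$-homogeneous of the correct degree (by passing to $w$-homogeneous components), and then $\tilde g := \sum_i h_i f_i \in I$ has top $w$-degree piece equal to $g$, so $in_w(\tilde g) = g$. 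The single-polynomial identity yields $in_v(g) = in_v(in_w(\tilde g)) = in_v(\tilde g) \in in_v(I)$. For a general $g \in in_w(I)$ with $w$-homogeneous decomposition $g = \sum_k g_k$, the disjointness of supports of distinct $g_k$'s identifies $in_v(g)$ with the sum of those $in_v(g_k)$'s attaining the overall maximum $v$-weight, each of which lies in $in_v(I)$ by the homogeneous case.

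The main obstacle is the lifting construction for $w$-homogeneous elements: one must verify that after refining each $h_i$ to its $w$-homogeneous component of the appropriate degree, the naive lift $\tilde g = \sum h_i f_i$ really does satisfy $in_w(\tilde g) = g$, which uses the $w$-graded structure of $in_w(I)$ essentially. Everything else reduces to combinatorial bookkeeping with the weight functions induced on monomials by $v$ and $w$.
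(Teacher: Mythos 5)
Your argument is correct, and its first step --- that $v$ and $w$ induce weight functions on monomials via multiplicativity, and that $v \Rightarrow w$ forces every $v$-maximal monomial of $f$ to also be $w$-maximal, whence $in_v(in_w(f)) = in_v(f)$ --- is precisely the entire content of the paper's proof, which simply records the implication $v(C_i\vec{x}^{\vec{a}(i)}) \leq v(C_j\vec{x}^{\vec{a}(j)}) \Rightarrow w(C_i\vec{x}^{\vec{a}(i)}) \leq w(C_j\vec{x}^{\vec{a}(j)})$ and declares the rest to follow ``directly from the definitions.'' Where you go beyond the paper is in actually deriving the ideal-level identity $in_v(in_w(I)) = in_v(I)$: the inclusion $in_v(I) \subseteq in_v(in_w(I))$ is immediate from the single-polynomial identity, but the reverse inclusion genuinely requires the standard lifting lemma (every $w$-homogeneous element of $in_w(I)$ is $in_w(\tilde g)$ for some $\tilde g \in I$, obtained by truncating the coefficients $h_i$ to their relevant $w$-homogeneous components) together with the reduction of a general element of $in_w(I)$ to its $w$-homogeneous pieces. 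This is the right way to close the gap the paper leaves open, and your argument is sound; the only point to state carefully is that in the representation $g = \sum_i h_i\, in_w(f_i)$ the $f_i$ must be allowed to range over all of $I$ (or over a Gr\"obner basis for $w$), since the initial forms of an arbitrary fixed generating set of $I$ need not generate $in_w(I)$.
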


\begin{proof}
This follows directly from the definitions, in particular for any two elements $a$ and $b$
we must have $v(a) \leq v(b)$ implies $w(a) \leq w(b).$  Hence, if

\begin{equation}
v(\phi(C_i\vec{x}^{\vec{a}(i)})) \leq v(C_j\phi(\vec{x}^{\vec{a}(j)}))\\
\end{equation}
\noindent
we must have

\begin{equation}
w(\phi(C_i\vec{x}^{\vec{a}(i)})) \leq w(\phi(C_j\vec{x}^{\vec{a}(j)}))\\
\end{equation}
\end{proof}

\noindent
In particular, if we have $v \Longleftrightarrow w$ then $Max_v(F) = Max_w(F)$ for all
$F \in I.$  Clearly for any $R \in \R_{\geq 0}$ we have $Rv \leftrightarrow v.$

\begin{proposition}
If $v \rightarrow w_1, w_2$ then $w_1 + w_2$ is a generalized valuation and $v \rightarrow w_1 + w_2.$
\end{proposition}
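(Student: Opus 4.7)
The plan is to dispatch both claims by reducing the valuation axioms for $w_1+w_2$ to the ideal equality $in_v(in_{w_1+w_2}(I)) = in_v(I)$. Axioms (1) and (3) of the valuation definition are immediate: $(w_1+w_2)(0) = -\infty$, and $(w_1+w_2)(ab) = (w_1+w_2)(a) + (w_1+w_2)(b)$ by additivity of the values $w_1(ab) = w_1(a)+w_1(b)$ and $w_2(ab) = w_2(a)+w_2(b)$.

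For axiom (2), I would argue by contradiction. Given $a, b \in A$, set $X = \{x_1, x_2, x_3\}$ with $\phi(x_i) = a, b, a+b$ respectively, so that $F := x_1 + x_2 - x_3$ lies in $I$. If axiom (2) failed, i.e., $(w_1+w_2)(a+b) > \max\{(w_1+w_2)(a), (w_1+w_2)(b)\}$, then $in_{w_1+w_2}(F) = -x_3$, placing $x_3$ in $in_{w_1+w_2}(I)$. Assuming the ideal equality (proved next), $x_3 \in in_v(I)$. But $v$ is itself a valuation with $v(a+b) \leq \max\{v(a), v(b)\}$, and this forbids $x_3$ from lying in $in_v(I)$: any relation $G \in I$ witnessing $x_3$ as a sole $v$-top monomial would, upon applying $\phi$, express $a+b$ as a $\phi$-image of terms of strictly smaller $v$-weight, forcing $v(a+b) < v(a+b)$, a contradiction.

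For the ideal equality, I would use a perturbation argument. The hypothesis $v \rightarrow w_i$ is equivalent, via standard initial-ideal degeneration theory, to the statement $in_{w_i + \varepsilon v}(I) = in_v(I)$ for all sufficiently small $\varepsilon > 0$. The set of weight functions $u$ with this stabilization property is a convex cone, closed under addition; since $w_1$ and $w_2$ both lie in it, so does $w_1 + w_2$, yielding $in_{w_1+w_2 + \varepsilon v}(I) = in_v(I)$. Passing to $\varepsilon \to 0^+$ along this cone gives the equality $in_v(in_{w_1 + w_2}(I)) = in_v(I)$. The main obstacle is ensuring this convexity holds uniformly across every presentation $(X, I)$; the cleanest route is to apply the convexity of Gr\"obner cones to each fixed finite presentation separately and then intersect, since the relation $v \rightarrow w_i$ is by definition required to hold for every such $(X, I)$.
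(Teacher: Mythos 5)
Your argument is correct in substance but takes a genuinely different route from the paper's. The paper works entirely at the level of maximal-term sets of a single form: from $in_v(in_{w_i}(I)) = in_v(I)$ applied to the relation $[a+b]=[a]+[b]$ it extracts the containment $Max_v(a+b,a,b)\subset Max_{w_i}(a+b,a,b)$, and a short case analysis on whether $w_1(a)$ is less than, equal to, or greater than $w_1(b)$ (using that the shared $v$-maximal terms force the corresponding $w_2$-values to agree) gives $w_1+w_2(a+b)\le \max\{w_1+w_2(a),\, w_1+w_2(b)\}$ directly; the same containment, applied to every form of every presentation, yields $v\rightarrow w_1+w_2$, since a term that is both $w_1$-maximal and $w_2$-maximal is automatically $(w_1+w_2)$-maximal. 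You reverse the logical order --- you establish $v\rightarrow w_1+w_2$ first and then deduce the ultrametric inequality by contradiction from the monomial-freeness of $in_v(I)$, which is exactly the content of the lemma in the proof of Theorem \ref{T1} --- and you obtain the ideal equality from the perturbation identity $in_v(in_u(I))=in_{u+\varepsilon v}(I)$ together with convexity of closed Gr\"obner cones. This organization is arguably cleaner, since the paper leaves the second claim of the proposition essentially implicit, but it buys this at the cost of heavier machinery. The one caveat worth flagging is the convexity step: for a non-homogeneous relation ideal $I\subset \Z[X]$ and weight vectors of arbitrary sign (valuations here may take negative values), the closed Gr\"obner equivalence classes need not be convex outside the Gr\"obner region, so ``standard initial-ideal degeneration theory'' does not apply unconditionally. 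You can bypass this entirely with the paper's elementary observation: $in_v(in_u(I))=in_v(I)$ forces $Max_v(F)\subset Max_u(F)$ for each $F\in I$, and $Max_v(F)\subset Max_{w_1}(F)\cap Max_{w_2}(F)\subset Max_{w_1+w_2}(F)$, which gives the same conclusion with no appeal to the Gr\"obner fan.
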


\begin{proof}
Clearly $w_1 + w_2(0) = - \infty,$ and $w_1 + w_2(ab) = w_1(ab) + w_2(ab) = w_1(a) + w_1(b) + w_2(a) + w_2(b) = w_1 + w_2(a) + w_1 + w_2(b).$  Now consider the elements
$[a + b], [a], [b] \in A$ (brackets for emphasis), which satisfy the relation 

\begin{equation}
[a + b] = [a] + [b]\\
\end{equation}

\noindent 
So by definition we must have containment of maximum terms $Max_v(a+b, a, b) \subset Max_{w_i}(a + b, a, b).$  Hence if $w_1(a) < w_1(b)$ then $w_1(b) = w_1(a + b)$
and $v(b) = v(a + b)$ and $w_2(a) \leq w_2(b).$  In this case we have the following, 

$$w_1 + w_2(a + b) = w_1(a + b) + w_2(a +b) \leq $$
$$max\{w_1(a), w_1(b)\} + max\{w_1(a), w_1(b)\} = $$
$$w_1(b) + w_2(b) = w_1 + w_2(b)$$

\noindent 
A similar statement holds if $w_1(a) > w_1(b)$ and if $w_1(a) = w_1(b).$  This proves
the proposition. 
\end{proof}

\noindent 
The relation $\rightarrow$ is also reflexive and transitive, and $\leftrightarrow$ defines
an equivalence relation. This implies that both $\Rightarrow$ and $\rightarrow$ divide $\mathcal{F}_A$ up into cones, and $\Rightarrow$ refines $\rightarrow.$  Also, fixing a particular set $X \subset A$ defines a relation $\rightarrow_X$ on $\mathfrak{A},$ and each of these is refined by $\Rightarrow,$ but these relations do not capture enough information for the above proof to work (not every set contains $a+b, a$, and $b$), so they do not necessarily form cones.

\subsection{Valuations over a fixed field}

\begin{definition}
Fix a field $K \subset A$ and a valuation $\mathfrak{v}:K \to \R.$
Let $\mathcal{F}_A(K) \subset \mathcal{F}_A$ be the set of generalized
valuations such that $v(k) = -\mathfrak{v}(k)$ for all $k \in K.$ 
\end{definition}

Note that this subset is not always complex of cones, as $v + w$ is never in $\mathcal{F}_A(K)$ if the valuation $\mathfrak{v}:K \to \R$ is nontrivial. The subsets defined by taking a trivial valuation, or the subset of all $v: A \to \R$ such that $v(k) = -r\mathfrak{v}(k)$ for all $k \in K$ for some fixed $ R \in \R_{\geq 0}$ is closed under scaling and addition within the cones, and so inherits the same type of structures, $\Rightarrow, \rightarrow$ found on $\mathcal{F}_A.$  
In \cite{P} it is shown that field extensions $L \subset K$ which preserve the chosen
valuation give the same tropical varieties when the ideal is generated by polynomials
with coefficients in $L.$  Note that this does not imply that $\mathcal{F}_A(L) = \mathcal{F}_A(K),$ as there are valuations of $A$ which agree with the valuation on $L,$ but not $K.$    

\begin{remark}
This extension of scalars result for a field inclusion $L \subset K$ is amusing when $L$ 
has the trivial valuation.  It can be used to prove the theorem in \cite{SpSt} asserting that 
the image of the "tropicalization map" for a field with valuation matches the tropical variety
over a subfield with trivial valuation. 
\end{remark}

\begin{theorem}\label{T1}
For any map from a polynomial ring
$$
\begin{CD}
0 @>>> I @>>> K[X] @>\phi>> A\\
\end{CD}
$$
\noindent
where $X = \{x_1, \ldots, x_n\}$ we get a map
$\hat{\phi}: \mathcal{F}_A(K) \to tr(I)$ defined by

\begin{equation}
v \rightarrow (v(\phi(x_1)), \ldots, v(\phi(x_n))).\\
\end{equation}
\noindent
Furthermore, this map respects the cone structure
on the tropical variety $tr(I).$
\end{theorem}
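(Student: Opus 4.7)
The plan is to prove the two assertions in sequence: that $\hat\phi(v)\in tr(I)$, and that $\hat\phi$ carries the cone structure on $\mathcal{F}_A(K)$ (coming from the relation $\rightarrow$) to the Gr\"obner fan structure on $tr(I)$. For the first, fix $v\in\mathcal{F}_A(K)$ and set $w_i=v(\phi(x_i))$, so $\hat\phi(v)=(w_1,\dots,w_n)$. By the multiplicative axiom and the hypothesis $v|_K=-\mathfrak{v}$, for every monomial term of a polynomial $f=\sum_a c_a x^a\in I$ we have $v(c_a\phi(x)^a)=-\mathfrak{v}(c_a)+w\cdot a$. Membership of $\hat\phi(v)$ in $tr(I)$ (in the max-plus convention used in the paper) is precisely the statement that for each such $f$ the maximum of $-\mathfrak{v}(c_a)+w\cdot a$ over $a$ is attained at least twice. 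The input we have is $\phi(f)=0$, so $v(\phi(f))=-\infty$, while every individual monomial term has finite $v$-value.

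The main technical step is a multi-term extension of Proposition~\ref{P1}: if the maximum of $v$-values among $k$ summands is attained \emph{uniquely}, then $v$ of the full sum equals that maximum (in particular, is finite). I would prove this by grouping, inducting on the number of nonzero terms: isolate the putative unique maximum $T_{\max}$ and let $T_{\mathrm{rest}}$ be the sum of the remaining terms. The induction hypothesis bounds $v(T_{\mathrm{rest}})$ by the second-largest term value, which is strictly less than $v(T_{\max})$ by assumption; Proposition~\ref{P1} applied to the binary sum $T_{\max}+T_{\mathrm{rest}}$ then forces $v(T_{\max}+T_{\mathrm{rest}})=v(T_{\max})$, contradicting $v(\phi(f))=-\infty$. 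Therefore the max is attained at least twice for every $f\in I$, so $\hat\phi(v)\in tr(I)$.

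For the cone structure, recall that $tr(I)$ inherits a fan from the Gr\"obner fan of $I$, in which two weight vectors $w,w'$ lie in incident cones iff $in_w(in_{w'}(I))=in_w(I)$. The relation $v\rightarrow v'$ on $\mathcal{F}_A(K)$ was defined in exactly this shape, applied to every finite subset $X\subset A$ with its associated ideal of relations. Specializing to $X=\{\phi(x_1),\dots,\phi(x_n)\}$, whose ideal of relations is $I$, the condition transfers verbatim to $in_{\hat\phi(v)}(in_{\hat\phi(v')}(I))=in_{\hat\phi(v)}(I)$; hence $\hat\phi$ sends incident cones of the $\rightarrow$-fan on $\mathcal{F}_A(K)$ to incident cones of the Gr\"obner fan restricted to $tr(I)$, and in particular sends an equivalence class of $\leftrightarrow$ into a single cone. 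The main obstacle is the multi-term induction in the previous paragraph: Proposition~\ref{P1} is only stated for binary sums, so care is needed to extract the unique maximum term from a sum of arbitrary length without inadvertently collapsing distinct term values. Once that lemma is in hand, both parts of the theorem follow formally from the definitions.
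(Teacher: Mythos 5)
Your proof is correct and follows essentially the same route as the paper: you group the sum as (unique top term) plus (the rest), bound the rest by the second-largest value via the ultrametric inequality, and invoke Proposition~\ref{P1} on the binary sum, while the paper runs the same add-and-subtract trick directly to conclude $v(C_1\vec{x}^{\vec{a}(1)})\leq\bigoplus_{i\geq 2}v(C_i\vec{x}^{\vec{a}(i)})$; your version is just the contrapositive phrased as a contradiction with $v(\phi(f))=-\infty$. The cone-structure part is likewise handled as in the paper, by specializing the relation $\rightarrow$ to the generating set $\{\phi(x_1),\ldots,\phi(x_n)\}$.
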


The last part of the theorem was discussed in the previous section for both the relations $\Rightarrow$ and $\rightarrow.$   Before we launch into a proof of the other part of this theorem let's consider a monomial $x_1^{a_1}\ldots x_n^{a_n} = \vec{x}^{\vec{a}} \in K[X].$  For $v \in \mathcal{F}_A,$ we must have $v(\vec{x}^{\vec{a}}) = \sum v(x_i)a_i,$ by the definition of a valuation. Now we fix a form in the $x_i,$

\begin{equation}
F(x_1, \ldots, x_n) = \sum C_i\vec{x}^{\vec{a}(i)}\\
\end{equation}
\noindent
which is contained in $I.$
First we prove that $\hat{\phi}(v)$ is a tropical
point.

\begin{lemma}
$\hat{\phi}(v)$ weights at least two monomials in
$F(x_1, \ldots, x_n)$ with the highest weight.
\end{lemma}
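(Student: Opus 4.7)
The plan is to use Proposition \ref{P1} together with the fact that $\phi(F)=0$ in $A$ (since $F\in I$) to rule out the possibility that a single monomial strictly dominates.

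First I would compute the weights. For each monomial $C_i\vec{x}^{\vec{a}(i)}$ the tropical weight under $\hat{\phi}(v)$ is, by the multiplicativity axiom of a valuation and the compatibility $v(k)=-\mathfrak{v}(k)$ on $K$, equal to
\begin{equation}
v\bigl(C_i\,\phi(\vec{x})^{\vec{a}(i)}\bigr) \;=\; -\mathfrak{v}(C_i) + \sum_{j} a(i)_j\,v(\phi(x_j)).
\end{equation}
So the question is whether the maximum of these values is attained by at least two indices $i$.

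Next I would argue by contradiction: suppose exactly one index $i_0$ attains the maximum, call the corresponding element $M=C_{i_0}\phi(\vec{x})^{\vec{a}(i_0)}$, and let $S=\sum_{i\ne i_0}C_i\phi(\vec{x})^{\vec{a}(i)}$, so that $\phi(F)=M+S$. Every monomial in $S$ has $v$-value strictly less than $v(M)$. By induction on the number of summands in $S$, iterating Proposition \ref{P1} (in its contrapositive form: if $v(a)\ne v(b)$ then $v(a+b)=v(a)\oplus v(b)$), one shows $v(S)<v(M)$. Applying Proposition \ref{P1} one more time to the pair $(M,S)$ gives
\begin{equation}
v(\phi(F)) \;=\; v(M+S) \;=\; v(M) \;\in\; \mathbb{R}.
\end{equation}

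Finally I would derive the contradiction: since $F\in I=\ker(\phi)$, we have $\phi(F)=0$, and therefore $v(\phi(F))=v(0)=-\infty$ by the first axiom, not a finite real number. This forces at least two monomials to share the maximum weight, as claimed.

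The one step that needs a bit of care (but is not really difficult) is the inductive reduction showing $v(S)<v(M)$: one must verify that when one sums finitely many elements each of valuation strictly below some common upper bound $v(M)$, the total still has valuation strictly below $v(M)$. This follows cleanly from Proposition \ref{P1} applied pairwise, since whenever two summands have distinct valuations the sum inherits the larger one, and whenever they are equal the sum has valuation at most that common value — in either case still strictly less than $v(M)$.
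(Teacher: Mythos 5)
Your proof is correct and is essentially the paper's own argument: the paper also uses $v(\phi(F))=-\infty$ together with the add-and-subtract trick behind Proposition \ref{P1} (it writes the top monomial as $F$ minus the remaining terms and applies subadditivity to bound its weight by the max of the others), whereas you run the same mechanism in contrapositive/contradiction form. The only cosmetic difference is that your intermediate claim $v(S)<v(M)$ follows directly from the subadditivity axiom by induction, without needing Proposition \ref{P1} pairwise.
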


\begin{proof}
This essentially follows from proposition \ref{P1}, but we will write out the proof.  First note that $v(F(\vec{x})) = -\infty.$ Select the monomial term with the highest weight $C_1\vec{x}^{\vec{a}(1)},$

\begin{equation}
\bigoplus v(C_i\vec{x}^{\vec{a}(i)}) \leq v(C_1\vec{x}^{\vec{a}(1)}).\\ 
\end{equation}

\noindent
Now we can perform the same trick used in the proof of proposition \ref{P1}.

\begin{equation}
v(C_1\vec{x}^{\vec{a}(1)}) = v( C_1\vec{x}^{\vec{a}(1)} + \sum_{i \geq 2} C_i\vec{x}^{\vec{a}(i)} -  \sum_{i \geq 2} C_i\vec{x}^{\vec{a}(i)}) \leq\\
\end{equation}

\begin{equation}
v(F(\vec(x)) \oplus v(\sum_{i \geq 2} C_i\vec{x}^{\vec{a}(i)}) = v(\sum_{i \geq 2} C_i\vec{x}^{\vec{a}(i)}) = \bigoplus v(C_i\vec{x}^{\vec{a}(i)}). \\
\end{equation}
\noindent
This proves that $v(C_1\vec{x}^{\vec{a}(1)}) =  \bigoplus v(C_i\vec{x}^{\vec{a}(i)}),$ 
so at least two terms must be given the highest weight by $v.$ 
\end{proof}

Since $F(x_1, \ldots, x_n)$ was arbitrary, we have  shown the initial ideal of $I$ with respect to $\hat{\phi}(v)$ is monomial free, and therefore this defines a point in $tr(I).$ In the case with trivial valuation, $\hat{\phi}(v)$ is a point in the Gr\"obner fan by construction.  This completes the proof of the theorem.  In \cite{P}, theorem 1.1 Payne uses the maps $\hat{\phi}: \mathcal{F}_A(K) \to tr(I)$ to define
a homeomorphism,

\begin{equation}
\mathcal{F}_A(K) \cong \displaystyle\lim_{\leftarrow} tr(I),\\
\end{equation}

\noindent
where the limit is over all ideals $I$ which present $A$ as a $K$ algebra.  We have seen how the map works in one direction, starting with a point $y_i \in  Lim_{\leftarrow} tr(I)$ we can define a valuation on $A$ by choosing a generating set of $A$ which includes
$f \in A,$ and  letting $v_{y_i}(f)$ be the value of the component of $y_i$ on $f$ on
this generating set, this value is well-defined because of properties of the inverse limit.  To prove that this actually defines a valuation, one looks at generating sets which contain two chosen elements $f$ and $g,$ and their product $fg$ and sum $f + g.$   Indeed, suppose we choose a generating set which includes $f,$ $g,$ and $f + g,$ then the relation 

\begin{equation}
[f+g] = [f] + [g]\\
\end{equation}

\noindent
(brackets added for emphasis) holds in the ideal defining the presentation.  Any 
weighting $w$ in the tropical variety must satisfy $w(f+ g) = w(f) \geq w(g),$
$w(f + g) = w(g) \geq w(f),$ or $w(f) = w(g) \geq w(f + g),$ so we can always write
$w(f + g) \leq max\{w(f), w(g)\}.$  If the generating set of the ideal contains $f,$ $g,$
and $fg$  then any term weighting must satisfy $w(fg) = w(f) + w(g).$ In \cite{P} it is also shown that $\mathcal{F}_A(K)$ surjects onto any given tropical variety associated to a presentation of $A$ by a $K$-polynomial algebra, so any given tropical point comes from a valuation in some sense.    However, this does not imply that valuations are the only way to get tropical points.

\subsection{Generalization to graded valuations}

We define graded valuations.  These objects are useful for capturing 
intrinsic tropical properties of a ring with extra grading structure.  

\begin{definition}
We call $v: A \to \mathbb{R}$ a graded valuation with respect to a grading 
$A = \bigoplus A_s$ if the following are satisfied.
\begin{enumerate}
\item $v(0) = -\infty$\\
\item v(ab) = v(a) + v(b) for $a$ and $b$ homogeneous.\\
\item $v(a + b) \leq v(a) + v(b)$ \\
\end{enumerate}
 
\noindent
We denote the set of all graded valuations with respect to $s$ by $\mathcal{F}_A^s.$
\end{definition}

\noindent
Note that it is easier to prove something is a graded valuation because less
equalities need be checked from condition $2.$ Graded valuations behave much like valuations, in particular they have a similar universality property for tropical varieties of embeddings
given by homogenous parameters. 

\begin{proposition}
Let $v \in \mathcal{F}_A^s(K) \subset \mathcal{F}_A^s$ be a graded
valuation which agrees with $-\mathfrak{v}: K \to \R$  on $K \subset A,$ and let 

$$
\begin{CD}
0 @>>> I @>>> K[X] @>\phi>> A
\end{CD}
$$
\noindent
be a presentation of a subalgebra by $s$-homogeneous elements.
This defines a map $\hat{\phi}: \mathcal{F}_A^s \to tr(I).$
\end{proposition}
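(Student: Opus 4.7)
The plan is to mimic the proof of Theorem \ref{T1}, but exploit the $s$-homogeneity of the generators $\phi(x_i)$ to reduce every verification to an $s$-homogeneous situation, where condition $2$ of the definition of graded valuation is available. The first step is to pull the grading back along $\phi$: give each variable $x_i$ the degree of $\phi(x_i) \in A_{s_i}$, which turns $K[X]$ into an $s$-graded algebra, makes $\phi$ a graded map, and forces $I = \ker \phi$ to be a graded ideal. Consequently any $F \in I$ decomposes as $F = \sum_s F_s$ with each $s$-homogeneous piece $F_s$ still lying in $I$.

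Next I would observe that the weight of a monomial under $\hat{\phi}(v)$ is the ``correct'' weight for the graded setting. Explicitly, for $\vec{x}^{\vec{a}} = x_1^{a_1}\cdots x_n^{a_n}$, the image $\phi(\vec{x}^{\vec{a}}) = \phi(x_1)^{a_1}\cdots\phi(x_n)^{a_n}$ is an iterated product of $s$-homogeneous elements, so by repeated application of condition $2$ we get
\begin{equation}
v(\phi(\vec{x}^{\vec{a}})) = \sum a_i v(\phi(x_i)) = \hat{\phi}(v)\cdot \vec{a}.
\end{equation}
Similarly $v(C\phi(\vec{x}^{\vec{a}})) = -\mathfrak{v}(C) + \hat{\phi}(v)\cdot\vec{a}$ for $C \in K$, since $C$ is homogeneous of degree $0$ and $v$ agrees with $-\mathfrak{v}$ on $K$. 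Thus the weight $\hat{\phi}(v)$ assigns to the $i$-th term of $F$ is exactly $v$ applied to the image of that term, provided the term is $s$-homogeneous after applying $\phi$---which is guaranteed precisely for the pieces of $F_s$.

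Fixing any $F \in I$, the maximum $\hat{\phi}(v)$-weight of a monomial of $F$ is achieved in some homogeneous piece $F_{s_0}$, so it suffices to produce two max-weight monomials inside $F_{s_0}$. At this point I would repeat the argument used in the lemma after Theorem \ref{T1}: pick the unique max-weight monomial $C_1\vec{x}^{\vec{a}(1)}$ of $F_{s_0}$ (if there were only one), and write
\begin{equation}
\phi(C_1 \vec{x}^{\vec{a}(1)}) = \phi(F_{s_0}) - \sum_{i \geq 2} \phi(C_i \vec{x}^{\vec{a}(i)}) = -\sum_{i \geq 2} \phi(C_i \vec{x}^{\vec{a}(i)}).
\end{equation}
Applying $v$ and using only subadditivity (condition $3$), the right side has $v$-value at most $\bigoplus_{i \geq 2} v(\phi(C_i\vec{x}^{\vec{a}(i)}))$, which contradicts the maximality of the first term. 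Hence at least two monomials of $F_{s_0}$, and therefore of $F$, are weighted maximally, which shows $\hat{\phi}(v) \in \mathrm{tr}(I)$.

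The only place requiring care is the reduction to the $s$-homogeneous piece: one must be sure that decomposing $F$ into graded components does not destroy the location of the maximum (it cannot, since the monomials of distinct $s$-components are disjoint sets) and that $F_{s_0}$ really lies in $I$ (which follows from $I$ being a graded ideal). Once these two points are in place, every weight computation needed in the lemma of Theorem \ref{T1} is a weight computation on $s$-homogeneous elements, so condition $2$ of the definition of a graded valuation is exactly strong enough. Cone-structure preservation is inherited verbatim from the discussion for $\mathcal{F}_A$, since the relations $\Rightarrow$ and $\rightarrow$ are defined purely in terms of the real values $v(a)$.
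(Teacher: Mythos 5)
Your proof is correct and takes essentially the same route as the paper: the paper's own proof is just the remark that the argument of Theorem \ref{T1} applies, and you have supplied exactly the details that make it apply, namely that each monomial in the generators maps to a product of $s$-homogeneous elements (so condition $2$ of a graded valuation gives the additive weight formula) while the cancellation trick for a relation $F \in I$ only ever invokes subadditivity on sums. Your preliminary decomposition of $F$ into graded pieces $F_{s_0} \in I$ is harmless but not actually needed, since both of those points already hold for an arbitrary $F \in I$.
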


\begin{proof}
This follows from the same argument used in theorem \ref{T1}.
\end{proof}

\noindent
As a consequence of this proposition there is a natural map, 

\begin{equation}
\mathcal{F}^s_A(K) \to  \displaystyle\lim_{\leftarrow} [tr(I), X \subset S]\\
\end{equation}

\noindent
where the limit is over all presentations by homogeneous generating sets.   There is also a natural map $\mathcal{F}_A \to \mathcal{F}_A^s$ given by weighting homogeneous terms in $A$ with respect to $s$ with generalized valuations.  In fact, if $s'$ is a grading on $A$
which refines $s$ then we have inclusions,

\begin{equation}
\mathcal{F}_A \subset \mathcal{F}_A^s \subset \mathcal{F}_A^{s'}.\\
\end{equation}
\noindent
In this way every $\mathcal{F}_A^s$ is contained in $\mathcal{F}_A^{\mathcal{B}}$ for 
the grading given by a basis $\mathcal{B} \subset A.$  The relation $\Rightarrow$ makes
sense for $\mathcal{F}_A^s$ as well, and the cone structure it induces is respected by the above inclusions. The $\rightarrow$ relation also makes sense.  

\begin{proposition}\label{conetop}
If $v \in \mathcal{F}_A \subset \mathcal{F}_A^s$ and $v \Rightarrow w$
then $ w \in \mathcal{F}_A.$
\end{proposition}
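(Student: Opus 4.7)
The plan is to verify the single axiom separating ordinary from graded valuations: full multiplicativity $w(ab)=w(a)+w(b)$ for all (not necessarily homogeneous) $a,b\in A$. The vanishing at $0$ and the max-subadditivity on sums come for free from $w$ being a graded valuation.

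I would expand $a=\sum_s a_s$ and $b=\sum_t b_t$ into homogeneous components, so that $ab=\sum_u (ab)_u$ with $(ab)_u=\sum_{s+t=u}a_s b_t$; by graded multiplicativity, $w(a_s b_t)=w(a_s)+w(b_t)$ for each pair. The key intermediate claim is
$$w(c)=\max_i w(c_i)$$
for every $c\in A$ with homogeneous decomposition $c=\sum_i c_i$. Granted this, applying the max axiom to $ab=\sum_u (ab)_u$ and to each $(ab)_u$ in turn yields
$$w(ab)=\max_{s,t}(w(a_s)+w(b_t))=\max_s w(a_s)+\max_t w(b_t)=w(a)+w(b),$$
which finishes the proof.

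To prove the claim, the inequality $w(c)\leq\max_i w(c_i)$ is immediate from the graded max axiom. For the reverse, fix $i^*$ realizing $\max_i v(c_i)$. In the generic case $v(c)=v(c_{i^*})$, the relation $v\Rightarrow w$ converts this identity into $w(c)=w(c_{i^*})$, and the inequalities $v(c_i)\leq v(c_{i^*})$ transfer into $w(c_i)\leq w(c_{i^*})$, so $w(c)=\max_i w(c_i)$. In the cancellation case $v(c)<v(c_{i^*})$, I would apply Proposition~\ref{P1} iteratively to the decomposition of $c$ to conclude that the top $v$-value must be attained by several homogeneous components simultaneously, and then use $v\Rightarrow w$ on those forced equalities to pin $w(c)$ to the common value $w(c_{i^*})$.

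The main obstacle I expect is exactly this cancellation case. A graded $w$ alone admits no a priori lower bound $w(c)\geq\max_i w(c_i)$; producing one requires fully exploiting that $v$ is a genuine ordinary valuation so that Proposition~\ref{P1} governs its cancellations even on nonhomogeneous sums, together with the order-theoretic compatibility $v\Rightarrow w$. Making this squeeze argument work on multi-term cancellations, where the top $v$-value is attained by three or more components, is the technical heart of the argument and is where the hypothesis $v\in\mathcal{F}_A$ (rather than merely $v\in\mathcal{F}_A^s$) is indispensable.
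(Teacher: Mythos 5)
The paper's own ``proof'' of this proposition is the single sentence that it follows from the definitions, so there is no detailed argument to compare against; judged on its own terms, your proposal has a concrete gap. The key intermediate claim, $w(c)=\max_i w(c_i)$ for every homogeneous decomposition $c=\sum_i c_i$, is false: taking $w=v$ (a legitimate instance, since $v\Rightarrow v$ and $v\in\mathcal{F}_A\subset\mathcal{F}_A^s$), the claim would say that an ordinary valuation never exhibits cancellation across the grading, i.e.\ $v(c)=\max_i v(c_i)$ always. That fails already for the monomial valuation on $\C[x,u]$ with $v(x)=1$, $v(u)=\tfrac12$, viewed on $A=\C[x,y]$ with $y=u-x$ and the monomial grading: $v(x)=v(y)=1$ but $v(x+y)=v(u)=\tfrac12$. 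Your proposed repair of the cancellation case cannot work in principle, because both tools you invoke only ever produce \emph{upper} bounds on $w(c)$: Proposition \ref{P1} says the top $v$-value is attained at least twice among the $c_i$, and $v\Rightarrow w$ converts $v(c)\leq v(c_{i^*})$ into $w(c)\leq w(c_{i^*})$; neither can manufacture the reverse inequality $w(c)\geq w(c_{i^*})$ that your claim needs. (A secondary issue: the regrouping $(ab)_u=\sum_{s+t=u}a_sb_t$ assumes $A_sA_t\subseteq A_{s+t}$, which the paper does not assume --- in its representation-theoretic examples multiplication is only ``lower triangular'' --- though this is harmless, since the graded axiom gives $w(a_sb_t)=w(a_s)+w(b_t)$ without $a_sb_t$ being homogeneous.)

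What genuinely does follow from the definitions is the sum axiom (if $v(a+b)\leq v(b)=\max\{v(a),v(b)\}$ then $w(a+b)\leq w(b)$ by $v\Rightarrow w$) and multiplicativity in the absence of cancellation: if $v(a)=v(a_{s^*})$ and $v(b)=v(b_{t^*})$ for top homogeneous components, then $v(ab)=v(a)+v(b)=v(a_{s^*}b_{t^*})$, whence $w(ab)=w(a_{s^*}b_{t^*})=w(a_{s^*})+w(b_{t^*})=w(a)+w(b)$ using $v\Rightarrow w$ twice and the graded axiom once. The cancellation case is the real content, and there the proposition itself appears to need more than the stated hypotheses: in the example above, $w=\lceil v\rceil$ is a graded valuation satisfying $v\Rightarrow w$ (the ceiling is non-decreasing and is additive on the integer values taken by $v$ on monomials), yet $w\bigl((x+y)^2\bigr)=\lceil 1\rceil=1$ while $2\,w(x+y)=2\lceil \tfrac12\rceil=2$, so $w\notin\mathcal{F}_A$. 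Rather than forcing the claim through, you should either isolate an additional hypothesis (e.g.\ that $v$ attains each of its values on a homogeneous element, or that $v(c)=\max_i v(c_i)$ for all $c$) or record the cancellation case as a gap in the statement itself.
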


\begin{proof}
This follows from the definitions. 
\end{proof}

\noindent
The following example shows how one can have a graded valuation which is not a valuation.

\begin{example}
We will look at the polynomial algebra $\C[x, y, z]$  Consider the following assignment
of weights,
$$v(x) = v(y) = v(z) = 1$$
$$v(\C) = 0$$ 
\noindent
We will select the grading given by the one dimensional subspaces defined by each monomial. 
It is obvious that for any two homogeneous elements, the weights of a product add.
Now, suppose we choose to weight $xy + xz$ less than $2.$ Then we get the following. 

\begin{equation}
v(x(y+z)) = v(xy + xz) < 2 = v(x) + v(y + z)\\ 
\end{equation}

\noindent
So this weighting fails as a valuation, but is a graded valuation. 
\end{example}

\noindent

The theory of generalized valuations leaves a lot of room for variations which can take into 
account the variety of structures one can put on a ring.  We have seen the graded variant above, but we could also look at the equivariant case, where we restrict attention to valuations which are preserved by a group action.   In the examples we will see the use of the trivial valuation on $\C.$  We can define the space of (graded) valuations $v$ for a $\C$-algebra by simple stipulating that  $v(C) = 0$ for all non-zero complex numbers $C.$  We also point out that $\R$ can be replaced by other tropical algebras which satisfy similar properties, namely any totally ordered group or semigroup. In the examples we will see the use of $\R_{\geq 0}$ where the role of $-\infty$ is filled by the lower bound $0.$  Indeed, any well-ordered tropical algebra will do, with the lower bound as the additive identity.

  \section{The functor of valuations}

  We now turn our attention to the properties of $\mathcal{F}_A$ and $\mathcal{F}_A^s$ as the algebra $A$ is allowed to vary.  If $f:A \to B$ is a ring homomorphism the pullback  of a valuation may not continue  to be a valuation.  This is because the map $f$ may have a nontrivial kernel, which must map to the lower bound of the tropical algebra being used.  Take this algebra to be $\R_{\geq 0},$ with lower bound $0.$ If $a$ is any element then $v(a) = v(ai) - v(i)$ for any element $i$ in the kernel of the map, this is $0.$
For this reason we focus on injections of algebras, where kernels cannot cause problems. 

  \begin{proposition}
  If $f: A \to B$ is an injection of algebras, then for any valuation
  $v \in \mathcal{F}_B,$ the pullback $f^*(v)$ is a valuation.
 Furthermore, if $v \Rightarrow w$  then $f^*(v) \Rightarrow f^*(w),$ 
and if $v \rightarrow w$  then $f^*(v) \rightarrow f^*(w).$
  \end{proposition}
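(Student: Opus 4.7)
The plan is to define the pullback by $f^*(v)(a) := v(f(a))$ and verify each claim by direct computation, with injectivity entering only at one delicate point.

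First I would check that $f^*(v)$ satisfies the three valuation axioms. Using that $f$ is a ring homomorphism and that $v$ is a valuation on $B$, one obtains $f^*(v)(0) = v(f(0)) = v(0) = -\infty$; for any $a, b \in A$, $f^*(v)(a+b) = v(f(a) + f(b)) \leq \max\{v(f(a)), v(f(b))\} = \max\{f^*(v)(a), f^*(v)(b)\}$; and $f^*(v)(ab) = v(f(a)f(b)) = v(f(a)) + v(f(b)) = f^*(v)(a) + f^*(v)(b)$. The role of injectivity at this stage is exactly to rule out the degeneracy described in the paragraph preceding the proposition: no nonzero element of $A$ is sent to the lower bound of the tropical target, so the pullback is a bona fide valuation rather than a collapsed function.

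Next I would verify that $v \Rightarrow w$ implies $f^*(v) \Rightarrow f^*(w)$. Given $a, b \in A$ with $f^*(v)(a) \leq f^*(v)(b)$, unpacking the definition gives $v(f(a)) \leq v(f(b))$, and applying the hypothesis $v \Rightarrow w$ in $B$ yields $w(f(a)) \leq w(f(b))$, which reads as $f^*(w)(a) \leq f^*(w)(b)$.

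The more interesting step is preservation of $\rightarrow$, and this is where injectivity really does some work. Given a finite set $X = \{x_1, \ldots, x_n\} \subset A$, let $J \subset \mathbb{Z}[X]$ be the kernel of the map $\mathbb{Z}[X] \to A$ sending $x_i$ to $x_i$. Because $f$ is injective, the kernel of the composite $\mathbb{Z}[X] \to A \to B$ coincides with $J$, so $J$ is simultaneously the ideal of polynomial relations among $X$ in $A$ and the ideal of polynomial relations among $f(X)$ in $B$. The weight vector used to form $in_{f^*(v)}(J)$ is $(v(f(x_1)), \ldots, v(f(x_n)))$, which is the same vector that defines $in_v$ on the relation ideal of $f(X)$; hence $in_{f^*(v)}(J) = in_v(J)$, and the same identification holds for $w$. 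The hypothesis $v \rightarrow w$ therefore gives $in_{f^*(v)}(in_{f^*(w)}(J)) = in_v(in_w(J)) = in_v(J) = in_{f^*(v)}(J)$. I expect this identification of relation ideals via injectivity to be the only real subtlety; once it is in place, the remainder of the proof is a mechanical unpacking of definitions.
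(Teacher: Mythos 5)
Your proof is correct and follows essentially the same route as the paper's, which simply verifies the axioms and the order-preservation directly from the definition $f^*(v)(a)=v(f(a))$. Your treatment is actually more complete: the paper's proof is terse and does not spell out the preservation of $\rightarrow$, whereas your identification of the relation ideal of $X$ in $A$ with that of $f(X)$ in $B$ via injectivity supplies exactly the detail needed for that last claim.
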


  \begin{proof}
 We need only note that by definition, any product
  of elements $a\times b$ in $A$ must have a highest weight term with
  weight equal to $v(a) + v(b) = f^*(v)(a) + f^*(v)(b).$  Also by definition
  if $v(a) \leq v(b)$ then $w(a) \leq w(b),$ so if $f^*(v)(a) \leq f^*(v)(b)$ then
  $f^*(w)(a) \leq f^*(w)(b).$ 
  \end{proof}

\noindent
 It is also easy to check that $(f\circ g)^* = g^* \circ f^*$ for injections $f$ and $g.$
These facts have a graded form as well. 

\begin{proposition}
Let $f: A \to B$ be an injection of algebras which respects underlying gradings on 
$A$ and $B.$

\begin{enumerate}
\item $A = \bigoplus A_s,$ $B = \bigoplus_t B_t$\\
\item If $a \in A$ is homogeneous then  $f(b) \in B$ is homogeneous.\\
\end{enumerate}

Then for a graded valuation $v$ on $B,$ the function $f^*(v)$ is a graded
valuation. 
\end{proposition}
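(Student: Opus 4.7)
The plan is to mimic the ungraded proof from the previous proposition, verifying the three defining properties of a graded valuation one at a time for $f^*(v) = v \circ f$, and noting at each step where the hypothesis that $f$ respects the gradings enters.

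First I would dispatch condition (1): since $f$ is a ring homomorphism, $f(0) = 0$, so $f^*(v)(0) = v(0) = -\infty$. Condition (3) is equally immediate and does not touch the grading at all: for arbitrary $a, b \in A$ we compute
\begin{equation}
f^*(v)(a+b) = v(f(a) + f(b)) \leq \max\{v(f(a)), v(f(b))\} = \max\{f^*(v)(a), f^*(v)(b)\},
\end{equation}
using only that $v$ is a graded valuation on $B$ and that $f$ is additive.

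The only step where the hypothesis actually has content is condition (2). Here I would take homogeneous $a, b \in A$, and invoke the assumption that $f$ sends homogeneous elements to homogeneous elements: $f(a), f(b) \in B$ are both homogeneous. Therefore
\begin{equation}
f^*(v)(ab) = v(f(a) f(b)) = v(f(a)) + v(f(b)) = f^*(v)(a) + f^*(v)(b),
\end{equation}
where the middle equality is exactly the multiplicativity clause in the definition of a graded valuation on $B$, applied to the homogeneous pair $(f(a), f(b))$.

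The main (rather minor) obstacle is just making sure the invocation of $v$'s graded multiplicativity is legitimate; since a graded valuation is only required to be multiplicative on \emph{homogeneous} elements, the whole argument hinges on the second hypothesis of the proposition, which guarantees that $f(a)$ and $f(b)$ remain homogeneous in $B$ whenever $a$ and $b$ are homogeneous in $A$. Injectivity of $f$ is not logically needed for any of the three properties to hold, but as discussed immediately before the proposition it prevents degenerate behavior coming from kernel elements, which is why it is built into the hypothesis.
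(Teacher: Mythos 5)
Your proof is correct and is exactly the argument the paper intends: the paper gives no separate proof for the graded case, remarking only that the facts of the preceding (ungraded) proposition "have a graded form as well," and your direct verification of the three axioms --- with the grading hypothesis entering precisely in the multiplicativity step for homogeneous elements --- is that graded form. Your closing remark that injectivity is not logically used here, but is imposed to match the discussion of kernels preceding the ungraded proposition, is also accurate.
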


\noindent
The inclusion $\mathcal{F}_A^s \subset \mathcal{F}_A^{s'}$ for a grading 
$s'$ which refines $s$ is a special case of this proposition.

\section{Examples}
In this section we will go through two examples and an amusing generalization of the objects we have been discussing.

\begin{example}
Consider the injection of rings defined by inverting an element.

$$
\begin{CD}
A @>i_f>> \frac{1}{f}A\\
\end{CD}
$$
\noindent
Note that we must have $0 = v(1) = v(f \frac{1}{f}) = v(f) + v(\frac{1}{f}).$
 Suppose $v$ and $w$ are two distinct valuations on $\frac{1}{f}A,$ then for some element
we must have

\begin{equation}
v(a_0 + \frac{1}{f}a_1 + \ldots + \frac{1}{f^n}a_n) \neq w(a_0 + \frac{1}{f}a_1 + \ldots + \frac{1}{f^n}a_n)\\
\end{equation}
\noindent

If $v(f) \neq w(f)$ then these elements define distinct elements in $\mathcal{F}_A,$
suppose then that $v(f) = w(f).$  We also must have

\begin{equation}
v(a_0 + \frac{1}{f}a_1 + \ldots + \frac{1}{f^n}a_n) + nv(f) =  v(f^n(a_0 + \frac{1}{f}a_1 + \ldots + \frac{1}{f^n}a_n)) = v(f^n a_0 + f^{n-1}a_1 + \ldots + a_n).\\
\end{equation}
\noindent
Since $v(f) = w(f),$ this implies that $v$ and $w$ must still differ on an element of $A.$   This implies that

\begin{equation}
i_f^*: \mathcal{F}_{\frac{1}{f}A} \to \mathcal{F}_A\\
\end{equation}
\noindent
is an injection of complexes of cones.  This generalizes to an inversion of
any set of elements.
\end{example}

\begin{example}
We'll consider the polynomial algebra $\mathbb{C}[t].$  By definition, the
value $v(t)$ determines $v(t^n) = n v(t).$   For a general polynomial
$p(t) = a_nt^n + \ldots + a_0$ we have the following,

\begin{equation}
v(t^n) = v(a_nt^n) = v(p(t) - (p(t) - a_nt^n)) \leq v(p(t)) \oplus v(p(t)  - a_nt^n)\\
\end{equation}

\begin{equation}
v(p(t)) \leq v(a_nt^n) \oplus v(p(t) - a_nt^n)\\
\end{equation}

\begin{equation}
v(p(t) -a_nt^n) < v(a_nt^n)\\
\end{equation}
\noindent
this implies that

\begin{equation}
v(p(t)) \leq v(t^n) \leq v(p(t))\\
\end{equation}
\noindent
so $v(p(t)) = v(t)deg(p(t)).$  This proves that $\mathcal{F}_{\mathbb{C}[t]} = \mathbb{R}_{\geq 0}.$\\   We could have also proved this using the fact that any polynomial over $\mathbb{C}$ factors, but the content of this example does not depend on the field being algebraically closed.
\end{example}

\begin{example}
We have already mentioned that the tropical algebra $\R \cup \{-\infty\}$ is not
necessary to the formulation of generalized valuations.  Suppose for example we
took a noncommutative tropical algebra, a general totally ordered semigroup $S,$
with a lower bound $O$, and define tropical operations $s_1 \otimes s_2 = s_1s_2$ and $s_1 \oplus s_2 = sup\{s_1, s_2\}.$  We could look at functions $v: A \to S$ such that the following are satisfied. 
\begin{enumerate}
\item $v(0) = O$\\
\item $v(ab) = v(a) \otimes v(b)$\\
\item $v(a + b) \leq v(a) \oplus v(b)$\\
\end{enumerate}

\noindent
We call this the set of generalized valuations of $A$ with coefficients in $S,$
$\mathcal{F}_{A, S}.$
We could then look at the ideal defined by a subset $X \subset A$ by a map 
from the noncommutative polynomial algebra on $X.$

$$
\begin{CD}
0 @>>> I @>>> \C<X> @> \phi >> A\\
\end{CD}
$$

\noindent
Each such $v$ defines a partial ordering on the free noncommutative monoid on $X$ symbols.

$$
\begin{CD}
<X> @>>> \C<X> @>>> A @>v>> S\\
\end{CD}
$$
\noindent 
Much of what we've formulated above carries over formally to this case.
From the conditions above one can prove that for each form $F(X) \in I$
there are at least two monomials in $F$ which are given the maximum value
by $v.$  One could take the set of all monoidal maps $Hom(<X>, S)$ and the
subset $tr_S(I) \subset Hom(<X>, S)$ of maps $v$ such that $in_v(I)$
is monomial free, there is a map 

\begin{equation}
\hat{\phi}: \mathcal{F}_{A, S} \to tr_S(I) \subset Hom(<X>, S)\\
\end{equation}

\end{example}

\section{Examples from representation theory}
In this section we will discuss how to obtain graded valuations from 
some familiar rings related to the representation theory of reductive groups.
The examples we will discuss will all involve an algebra $A,$ with a grading
of the underlying vector space given by a monoid $\mathcal{S},$

$$A = \bigoplus_{s \in \mathcal{S}} A_s.$$

\noindent
Orderings $v:\mathcal{S} \to \R$ will be given such that multiplication in $A$
is lower-triangular with respect to $v.$  So the weight of a sum of two elements from
different graded pieces can then be defined as the max, $v(a + b) = v(a) \oplus v(b).$
The following is a consequence of these conventions. 

\begin{proposition}
If $A,$ $v$ and $\mathcal{S}$ are as above then $v$ defines a graded valuation on 
$A.$  
\end{proposition}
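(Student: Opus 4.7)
The function $v$ is defined initially only on the monoid $\mathcal{S}$, so my first step is to extend $v$ to all of $A$ in the only natural way: for a nonzero $x = \sum_{s} x_s$ with $x_s \in A_s$, set $v(x) = \max\{v(s) : x_s \neq 0\}$, and set $v(0)$ equal to the lower bound of the tropical algebra (so axiom (1) is immediate by fiat). This gives the convention described just before the proposition: on elements supported in distinct graded pieces, the weight is the $\oplus$ of the piece-weights.

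Next I would verify condition (3), the subadditivity under sums, before tackling (2), because it is the purely formal step. For arbitrary $a = \sum a_s$ and $b = \sum b_s$, one has $a+b = \sum (a_s + b_s)$, and any $s$ for which the combined $s$-component is nonzero must already have $a_s \ne 0$ or $b_s \ne 0$. Therefore
\begin{equation}
v(a+b) = \max\{v(s) : a_s + b_s \ne 0\} \leq \max\bigl(\{v(s) : a_s \ne 0\}\cup\{v(s): b_s \ne 0\}\bigr) = v(a) \oplus v(b),
\end{equation}
with strict inequality possible exactly when the top-degree pieces cancel.

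The real content lies in condition (2), and this is where the hypothesis that multiplication is lower-triangular with respect to $v$ is used. For homogeneous $a \in A_s$ and $b \in A_t$, the lower-triangularity assumption should be unpacked as: the product $ab$ lies in $\bigoplus_{r} A_r$ where the sum runs over indices $r \in \mathcal{S}$ with $v(r) \leq v(s) + v(t)$, and moreover the leading (top-weight) component of $ab$ sits in $A_{s+t}$ and is nonzero whenever $a$ and $b$ are. Granting this, and using that $v : \mathcal{S}\to\R$ is an ordering so that $v(s+t) = v(s)+v(t)$, the extended $v$ satisfies $v(ab) = v(s+t) = v(s) + v(t) = v(a) + v(b)$, which is precisely (2).

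The main obstacle, as I see it, is not any single calculation but rather pinning down exactly what "lower-triangular with respect to $v$" is to mean: without it, condition (2) can fail because the $(s+t)$-component of $ab$ could vanish. So the proof essentially reduces to three observations — subadditivity from the max definition, multiplicativity from lower-triangularity combined with $v$ being an ordering, and the convention $v(0) = -\infty$ — and the hard part is conceptual rather than technical: one must read the conventions set up in the paragraph preceding the proposition as supplying exactly the hypothesis needed for (2) to hold.
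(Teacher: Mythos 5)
Your proof is correct and is exactly the argument the paper intends; the paper in fact supplies no proof at all, asserting the proposition as an immediate consequence of the conventions you unpack (extend $v$ to $A$ by taking the max over the weights of the nonzero graded components, check subadditivity formally, and get multiplicativity on homogeneous elements from lower-triangularity). Your observation that condition (2) requires the leading $(s+t)$-component of $ab$ to be nonzero is well taken --- the paper only states that nonvanishing explicitly as an additional hypothesis in the paragraph \emph{after} the proposition, so for the statement to hold one must, as you do, read it into the meaning of ``lower-triangular with respect to $v$.''
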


\noindent
This situation is made more interesting if the valuation and the monoid behave well together.  For instance, if for any two elements $a \in A_{s_1}$ and $b \in A_{s_2}$ we have $ab \in A_{s_1 + s_2} \oplus \ldots,$ with
$(ab)_{s_1 + s_2} \neq 0$ and $v(s_1 + s_2) > v(s)$ for the other $s$ appearing
in the decomposition of $ab.$ If these conditions are satisfied for $A$ then it is also true 
for graded subalgebras of $A.$  If the grading $S$ has only $1$-dimensional components,
then graded valuations that possess this property define filtrations on $A$ which 
have monoidal associated graded algebras, as do subalgebras which respect the grading. 
With the next theorem we see that with some assumptions we can gurantee the existence
of generalized valuations (that need not be graded),  this is satisfied by special variants of the valuations constructed in our examples and the valuations from algebraic geometry presented in \cite{LM} and \cite{KKh}. 

\begin{theorem}\label{monoid}
Suppose $A$ has a direct sum decomposition $\bigoplus_S A_s$ by the elements
of a monoid $S,$ and the following are satisfied. 

\begin{enumerate}
\item the expansion of $a_{s_1}b_{s_2}$ contains a component of weight $s_1+ s_2$\\
\item there is a total order weighting $w$ of $S$\\ 
\end{enumerate} 
\noindent
Then any $v:A \to \R$ for which $w \Rightarrow v$ defines a generalized valuation of $A.$
\end{theorem}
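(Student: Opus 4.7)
My plan is to reduce the theorem to the single claim that $w$, viewed as a function on $A$, is itself a generalized valuation, whence the conclusion $v \in \mathcal{F}_A$ follows immediately from Proposition \ref{conetop} applied to $w \Rightarrow v$ (with the roles of $v$ and $w$ swapped relative to its statement). So I would first extend $w \colon S \to \R$ to $w \colon A \to \R$ via $w(\sum_i a_{s_i}) = \max_i w(s_i)$ on the decomposition of $a$ into non-zero homogeneous components, with $w(0) = -\infty$. The ultrametric inequality $w(a+b) \leq w(a) \oplus w(b)$ is automatic from this max-definition, since every grade appearing in the decomposition of $a + b$ already appears in that of $a$ or of $b$.

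The substantive step is verifying $w(ab) = w(a) + w(b)$ for all $a, b \in A$. Writing $a = \sum_i a_{s_i}$ and $b = \sum_j b_{t_j}$, let $s^*, t^*$ be the grades realizing $w(a)$ and $w(b)$, which are unique by the total-order hypothesis (2). Expanding
\[
ab \;=\; \sum_{i, j} a_{s_i} b_{t_j},
\]
hypothesis (1) provides a non-zero component in $A_{s_i + t_j}$ inside each homogeneous product $a_{s_i} b_{t_j}$, while the ambient lower-triangularity of multiplication stipulated earlier in the section prevents any $a_{s_i} b_{t_j}$ from having components of $w$-weight above $w(s_i) + w(t_j)$. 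Since $w$ is additive on the monoid and totally ordered, $s^* + t^*$ strictly dominates every other $s_i + t_j$, so the $(s^* + t^*)$-piece of $ab$ receives contributions only from $a_{s^*} b_{t^*}$, and that piece is non-zero by (1). This yields $w(ab) = w(s^*) + w(t^*) = w(a) + w(b)$. Once $w \in \mathcal{F}_A$ is established in this way, Proposition \ref{conetop} closes the argument.

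The main obstacle is precisely this multiplicativity step: one must rule out cancellation at the top $w$-weight of $ab$. This hinges simultaneously on both hypotheses — the total order in (2), which furnishes unique leading grades $s^*, t^*$, and the lower-triangularity of the multiplication, which keeps the $(s^* + t^*)$-component of $ab$ free of contributions from lower-grade factors. If either ingredient is weakened — for instance, if $w$ only partially orders $S$, or if multiplication can produce components with $w$-weight above $w(s_1 + s_2)$ — the exhibited leading term can be cancelled and the multiplicative axiom can fail.
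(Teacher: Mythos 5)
Your argument is correct and is essentially the paper's own proof: both reduce to showing that $w$ itself is a generalized valuation via Proposition \ref{conetop}, and both combine hypothesis (1) with the ambient lower-triangularity to identify the top $w$-component of each $a_{s_i}b_{t_j}$ with the grade $s_i + t_j$, then use the total order on $S$ to see that only the single product $a_{s^*}b_{t^*}$ can contribute to the leading grade $s^* + t^*$, so no cancellation can occur there. The only difference is presentational — you argue directly that the leading term survives, while the paper assumes $w(AB) < w(A) + w(B)$ and derives a contradiction.
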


\begin{proof}
It suffices by proposition \ref{conetop} to prove this for $w.$
Suppose that for inhomogenous elements $A = a_{s_1} + \ldots + a_{s_n}$ and $B = b_{t_1} + \ldots + b_{t_m}$ that $w(AB) < w(A) + w(B)$ (note that this is all that could go wrong).
For each pair let $a_{s_i}b_{t_j} = \sum c^k_{s_i, t_j}$ be the expansion into
homogenous elements, with $c_{s_i, t_j}^{top}$ the highest component by $w,$
which must have a grading of $s_i + s_j.$
Let $s_1$ and $t_1$ have the highest grades with respect to $w.$   
We must have that the sum of all elements of grade $s_1 + t_1$ is $0.$  We can assume without loss of generality that all of these components are of the form $c_{s_i, t_j}^{top}.$  To see this note that if the grade of $c_{s_i, t_j}^k$ is $s_1 + t_1$ then we must have $w(s_1 + t_1) < w(s_i + t_j)$ unless $k =$ $top.$  So we have

\begin{equation}
\sum c_{s_i, t_j}^{top} = 0\\
\end{equation}

\noindent
for some collection $\{(i, j)\} \subset [1, n]\times [1, m],$ with $s_1 + t_1 = s_i + t_j,$ 
and these are all the components of weight $s_1 + t_1.$    
This implies that $w(s_1) = w(s_i)$ and $w(t_1) = w(t_j)$ for all $i, j,$ because if any of these
were inequalities, $s_1 + t_1$ would not be highest by $w,$ or $s_1 + t_1 \neq s_i + t_j.$ 
But $w$ is a total ordering, so we must have $s_1 = s_i$ and $t_1 = t_j,$ and we must have
$v(A'B') \neq v(A') + v(B')$ for pure elements $A'$ and $B',$ a contradiction.  So, the highest
component of $AB$ by $w$ (therefore with unique $S$ weight) cannot vanish.
\end{proof}

\noindent 
From this theorem we can conclude that weakenings of the total orderings of the dual canonical basis and gradings of branching algebras to be discussed below are generalized valuations.  For dual canonical bases, the total orderings themselves give toric degenerations 
of the associated algebra.  We also mention the following corollary of the proof of theorem \ref{monoid}. 

\begin{corollary}\label{monbran}
If the associated graded algebra of an algebra $A = \bigoplus_S A_s$ with respect to $w: S \to \R$ is a domain, and $w$ well-orders the elements in the expansion of any product, then $w$ is a generalized valuation. 
\end{corollary}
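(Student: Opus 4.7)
The plan is to adapt the proof of Theorem \ref{monoid}, replacing the appeal to its condition (1) with the hypothesis that $\gr_w(A)$ is a domain. The proposition preceding Theorem \ref{monoid} already guarantees that $w$ is a graded valuation, so the only new content is the multiplicative identity $w(AB) = w(A) + w(B)$ on arbitrary (possibly inhomogeneous) elements $A, B \in A$.

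First, decompose $A = \sum_i a_{s_i}$ and $B = \sum_j b_{t_j}$ into $S$-homogeneous summands. The well-ordering hypothesis on expansions guarantees that the sets of $w$-values appearing in $A$ and $B$ attain maxima; relabel so that these are $w(s_1)$ and $w(t_1)$. Set
\[
\bar A = \sum_{i : w(s_i) = w(s_1)} a_{s_i}, \qquad \bar B = \sum_{j : w(t_j) = w(t_1)} b_{t_j},
\]
and identify these with their (nonzero) classes in the associated graded algebra $\gr_w(A)$, living in real degrees $w(s_1)$ and $w(t_1)$ respectively.

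The key observation is that, because multiplication in $A$ is lower-triangular with respect to $w$, the only contributions to the part of $AB$ of $w$-weight $w(s_1)+w(t_1)$ come from products $a_{s_i} b_{t_j}$ with $w(s_i) = w(s_1)$ and $w(t_j) = w(t_1)$. Collecting these, the class of the top piece of $AB$ in $\gr_w(A)$ is precisely $\bar A \bar B$. Since $\gr_w(A)$ is a domain and both $\bar A$ and $\bar B$ are nonzero, this class is nonzero, so $w(AB) \geq w(s_1) + w(t_1) = w(A) + w(B)$. The reverse inequality is immediate from lower-triangularity: each $a_{s_i} b_{t_j}$ expands into pieces of $w$-weight at most $w(s_i) + w(t_j) \leq w(s_1) + w(t_1)$. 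Combining the two inequalities completes the proof.

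The main obstacle is the bookkeeping between the $S$-grading on $A$ and the real-valued $w$-filtration: one has to check carefully that the top $w$-weight piece of $AB$ in $A$ is faithfully represented by $\bar A \bar B$ in $\gr_w(A)$, with no unexpected cancellations contributed by lower $S$-grades that happen to share the top $w$-value. The well-ordering hypothesis on product expansions is exactly what makes this step safe, since it forces the top-weight part of each $a_{s_i} b_{t_j}$ to be carried by the single $S$-grade $s_i + t_j$ and ensures that the collection of top-weight contributors to $AB$ is finite and unambiguously identified.
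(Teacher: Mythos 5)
Your argument is correct and is essentially the paper's own proof, merely run in the direct rather than the contrapositive direction: both reduce to the observation that the top-$w$-weight part of $AB$ is represented in $\gr_w(A)$ by the product of the classes of the top homogeneous pieces $\bar A$ and $\bar B$, which the domain hypothesis forces to be nonzero. Your explicit attention to why $\bar A\bar B$ (rather than $AB$ itself) is the right element to consider in the associated graded matches the parenthetical caveat in the paper's proof.
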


\begin{proof}
Suppose $w$ is not a generalized valuation, then some product of inhomogeneous elements $AB = (\sum a_{s_i})(\sum b_{t_j}) = \sum a_{s_i}b_{t_j} = \sum c_{s_i, t_j}^k$ must have all of its top weighted terms cancel out.  By the same reasoning as in the proof of theorem \ref{monoid} we have that the product of the sums of the subsets of the top $w$-weighted homogeneous terms of $A$ and $B,$ $A' = \sum_{top} a_{s_i}, $ $B' = \sum b_{t_j}$ must 
have $v(A'B') < v(A) + v(B).$ Therefore this product vanishes in the associated graded algebra. (it may be instructive to see why the product $AB$ does not necessarily vanish in the associated graded)
\end{proof}

\noindent
This corollary can be used to show that special graded valuations on branching algebras
are actually generalized valuations, we will mention more below.

\subsection{Rings with a reductive group action}
In this example we will follow the books by Grosshans and Dolgachev, \cite{Gr} and \cite{D}.  Let $A$ be any commutative algebra over $\C$ with an action of a reductive group $G.$  
Then $A$ has a direct sum decomposition into weighted components from the category of finite
dimensional representations of $G$.

\begin{equation}
A = \bigoplus_{\lambda \in \mathcal{S}_G} A^{U_+}(\lambda) \otimes V(\lambda^*)\\
\end{equation}

\noindent
Here, $A^{U_+}(\lambda)$ is the $\lambda$ weight component of the invariants of $A$
with respect to the action the maximal unipotent $U_+ \subset G.$  Let $\mathfrak{h}$ be the Cartan subalgebra corresponding to a chosen maximal torus of $G.$  Multiplication in this algebra is lower triangular in the sense that if $a$ and $b$ are from the $\lambda$ and $\gamma$ components of $A$ then $ab$ decomposes into homogeneous elements from components of weights less than or equal to $\lambda + \gamma$ as dominant weights. 
Any functional $\mathfrak{h}^* \to \R_{\geq 0}$ which respects the ordering on dominant weights then defines graded valuation of $A$ for the $G$-decomposition.  

More generally, one could also start with a graded valuation on $A^{U_+}$ with respect
to the grading $A^{U_+} = \bigoplus_{\lambda \in \mathcal{S}_G} A^{U_+}(\lambda)$
and define a graded valuation on $A$ by ignoring the $V(\lambda^*$ components of
elements in the $A^{U_+}(\lambda) \otimes V(\lambda^*)$ summand.  Note that graded
valuations constructed this way are always $G$-invariant.

\subsection{Dual canonical bases}
In this example we use the same set up as the last example, with $A$ a $\C$-algebra
which carries the action of a reductive group $G,$ we follow the paper of Alexeev and Brion, \cite{AB}.  Each representation $V(\lambda)$ of $G$ has a distinguished basis $B(\lambda)$, called the dual canonical basis.  This basis was defined by Lusztig and has been studied by many authors, see for example \cite{AB} and \cite{BZ}.  We can refine the grading of the previous example to the one given by the components $A^{U_+}(\lambda)$ tensored with members of $B(\lambda).$  

\begin{equation}
A = \bigoplus_{\lambda, \phi \in \mathcal{S}_G, B} A^{U_+}(\lambda) \otimes \C b_{\lambda, \phi}\\
\end{equation}

\noindent
Multiplication for the dual canonical basis is lower triangular with respect an ordering of weights called string parameters which index the basis, these depend on a chosen decomposition of the longest element of the Weyl group, see \cite{AB} or \cite{BZ2}.  Any functional $h: B \to \R_{\geq 0}$ which respects this ordering then gives a graded valuation.  Note that this example specializes to the previous example for any functional which only sees the dominant weight information.


\subsection{Branching algebras}
Branching algebras are discussed in both \cite{M1} and \cite{M2}, they are multigraded
commutative algebras over $\C,$ where the dimensions of the graded components give the
branching multiplicities for a morphism of reductive groups.  For a reductive group $G,$ let
$C_G$ denote the cone of dominant $G$-weights.  Let $U_+ \subset G$ be a maximal unipotent subgroup, we let $R(G)$ denote the commutative algebra defined as follows. 

\begin{equation}
R(G) = \C[G / U_+] = \C[G]^{U_+} = [\bigoplus_{\lambda \in C_G} V(\lambda^*)\otimes V(\lambda)]^{U_+} = \bigoplus_{\lambda \in C_G} V(\lambda^*) \\
\end{equation} 

\noindent
For a map of reductive groups $\phi: H \to G$ we define the full branching algebra for
$\phi$ as follows. 

\begin{equation}
\mathfrak{A}(\phi) = [R(H)\otimes R(G)]^H = R(G)^{U(H)_+}\\
\end{equation}

\noindent
As vector spaces, these algebras decompose into meaningful graded components.

\begin{equation}
\mathfrak{A}(\phi) = \bigoplus_{\mu, \lambda \in C_H\times C_G} Hom_H(W(\mu), V(\lambda))
\end{equation}

\noindent
We have called these full branching algebras because multigraded subalgebras of $\mathfrak{A}(\phi)$ are also meaningful for representation theory, see \cite{M2}.  The reader's
favorite subcones of $C_H\times C_G$ define subalgebras of $\mathfrak{A}(\phi)$ which
share many useful features.  In particular the subcone $\{0\}\times C_G$ defines the subalgebra of $H$-invariants in $R(G)$ with respect the action through $\phi.$
Now take $\phi = \pi \circ \psi$ to be any factorization of $\phi$ in the category of
reductive groups.
$
\begin{CD}
H @>\psi>> K @>\pi>> G\\
\end{CD}
$
\noindent
On the level of vector spaces this refines the direct sum decomposition with respect
to intermediate branching over $K.$

\begin{equation}
Hom_H(W(\mu), V(\lambda)) = \bigoplus_{\eta \in C_K} Hom_H(W(\mu), Y(\eta)) \otimes Hom(Y(\eta), V(\lambda))\\
\end{equation}

\noindent 
Multiplication with respect to this refinement is "lower triangular" in the sense that the multiplication of two elements, 
$$
\begin{CD}
W(\mu_1) @>f_1>> Y(\eta_1) @>g_1>> V(\lambda_1)\\
\end{CD}
$$
$$
\begin{CD}
W(\mu_2) @>f_2>> Y(\eta_2) @>g_2>> V(\lambda_2)\\
\end{CD}
$$
\noindent
Yields a sum of homogeneous terms of the form 
$$
\begin{CD}
W(\mu_1 + \mu_2) @>f>> Y(\eta) @>g>> V(\lambda_1 + \lambda_2)\\
\end{CD}
$$

\noindent
with $\eta \leq \eta_1 + \eta_2$ as dominant weights.  Also, there is always a nonzero
highest weight term with $\eta = \eta_1 + \eta_2,$ of the form 
$$
\begin{CD}
W(\mu_1+\mu_2) @>f_1\circ f_2>> Y(\eta_1 + \eta_2) @>g_1 \circ g_2>> V(\lambda_1 + \lambda_2)\\
\end{CD}
$$
\noindent
where $f_1 \circ f_2$ and $g_1 \circ g_2$ are the multiplications in the branching algebras 
$\mathfrak{A}(\psi)$ and $\mathfrak{A}(\pi)$ respectively.  One can now choose functionals
$\vec{h}$ on the monoid $C_H\times C_K\times C_G$ to produce graded filtrations of $\mathfrak{A}(\phi).$  It is shown in \cite{M1} and \cite{M2} that if the functional $\vec{h}$ is non-negative on all positive roots, it defines a graded valuation. Furthermore, if the functional 
is strictly positive on positive roots then it defines a filtration with a remarkable associated graded.

\begin{equation}
 gr_{\vec{h}}[\mathfrak{A}(\psi\circ \pi)] = [\mathfrak{A}(\psi) \otimes \mathfrak{A}(\pi)]^T
\end{equation}

\noindent
Here $T$ is a maximal torus of $K,$ which picks out the correct tensors of graded components.
These are the generalized valuations mentioned previously in the discussion of corollary \ref{monbran}.  The same reasoning can be used to show that graded valuations defined similary for the algebra of conformal blocks and studied in \cite{M1} are actually generalized valuations. For each factorization of a morphism $\phi = \pi_1 \circ \ldots \circ \pi_k$ in the category of reductive groups we obtain a cone of suitable functionals.   Given a functional $\vec{h} = (h_1, \ldots h_{k+1})$ for a factorization $(\pi_1, \ldots, \pi_k)$  we can set a component $h_i$ to $0$ and obtain a new functional $\hat{h}_i$ for the factorization 
$(\pi_1, \ldots, \pi_{i+1} \circ \pi_i, \ldots, \pi_k).$  
In this way some of the structure of category of reductive groups is realized in a polyhedral complex of graded valuations on branching algebras.   Also, multigraded subalgebras of the full branching algebra inherit these valuations by the functor properties, in particular this holds for the algebra of invariants $R(G)^H.$  

An amusing special case of a branching algebra filtration as above is given by observing that any representation of a reductive group $V$ defines a map $G \to GL_n$ where
$n = dim(V).$  This map factors the map from the trivial group $1 \to GL_n,$ and so 
defines a multifiltration of $\mathfrak{A}(1 \to GL_n) = R(GL_n).$ In this way any representation of a reductive group can be associated with a cone of graded valuations
of $R(GL_n)$ for some $n.$

\subsection{More valuations from branching structures}
Let $A$ be an algebra over $\C$ with an action of a reductive group $G,$ and let $\phi: H \to G$ be a morphism of reductive groups.  We consider the algebra $[R(H)\otimes A]^H.$ 
From the direct sum decomposition of $A$ we get the following direct sum decomposition of $[R(H)\otimes A]^H.$

\begin{equation}
[R(H)\otimes A]^H = \bigoplus_{\mu, \lambda \in C_H \times C_G} A^{U_+}(\lambda^*) \otimes Hom_H(W(\mu), V(\lambda))\\
\end{equation}

\noindent
One can prove that multiplication $[R(H)\otimes A]^H$ has a similar "lower triangular" property
with respect to the dominant weight data in this decomposition.  
In this way $[R(H)\otimes A]^H$ inherits the valuations on $\mathfrak{A}(\phi)$ discussed
in the previous example.   The same holds for multigraded graded subalgebras of $[R(H) \otimes A]^H,$ such as the algebra of invariants, $A^H.$

\begin{equation}
A^H = \bigoplus A^{U_+}(\lambda^*)\otimes Hom_H(\C, V(\lambda))\\
\end{equation}

\noindent
Incidentally, both of these algebras also inherit any graded valuation of $A^{U_+}$
as in the first example above. 

\subsection{More valuations from dual canonical bases}
The functor properties of graded valuations are useful for passing combinatorial structures
from a ring to a subring.  We return to the dual canonical bases, discussed in a previous example.  Recall that these define a direct sum decomposition of an irreducible representation $V(\lambda) = \bigoplus \C b_{\phi},$ and therefore define a grading on the underlying vector space of $R(G)$ by the monoid of string parameters for a chosen decomposition of the longest element of the Weyl group.  Recall also that for suitably assigned orders on these parameters, the multiplication in $R(G)$ is "lower triangular."  Both of these properties therefore pass to subrings which respect this multigrading.  Let $G' \subset G$ be a levi subgroup of $G,$ which corresponds to a subset of the positive roots $\mathcal{R}_{G'} \subset \mathcal{R}_G.$  The subspaces

\begin{equation}
Hom_H(W(\mu), V(\lambda)) = \{v | e_i(v) = 0, h_i(v) = \mu(h_i)v, \alpha_i \in R_{G'}\} \subset V(\lambda),\\
\end{equation}

\noindent
inherit a basis from the dual canonical basis of $V(\lambda),$ where $e_i$ is the raising operator and $h_i$ is the generator of the Cartan subalgebra of $\mathfrak{g}$ corresponding
to the root $\alpha_i.$  Therefore, for a Levi subgroup $G' \subset G,$ we can construct
graded valuations acting on the string parameters of the dual canonical basis, and the associated graded algebras of these multifiltrations are monoidal, as in \cite{AB}.  For a flag of Levi subgroups $G_0 \subset G_1 \subset, \ldots, \subset G,$ this can be used to construct
graded valuations which refine those constructed in the previous example on the branching algebra.  

\bigskip

By a similar argument, the same is true for other rings that inherit the dual canonical basis. Consider the algebra $R(G)\otimes \C[T],$ for a maximal torus $T \subset G.$  We will let $T$ act on $R(G)$ on the left and on $T$ by $t^{-1},$ and consider the subring of invariants.

\begin{equation}
[R(G)\otimes \C[T]]^T = \bigoplus_{\mu, \lambda} V_{\mu}(\lambda) \otimes \C t^{-\mu}\\
\end{equation}

\noindent
Here $V_{\mu}(\lambda)$ is the subspace of $V(\lambda)$ with left $T-$character equal 
to $\mu.$  It is known that this subspace inherits the dual canonical basis.  This ring is particularly interesting because it contains the projective coordinate rings of weight varieties
of $G /B$ as multigraded subalgebras.  By the functor property, these rings inherit 
graded valuations from $R(G) \otimes \C[T]]^T.$

\bigskip

Now we consider opposite maximal unipotents $U_{+}$ and $U_{-}$ in $G,$
and the map of varieties, 

$$
\begin{CD}
T\times G/U_+ @>>> G/U_+ \times G/U_+ @>>> U_{-} \ql [G/U_+ \times G/U_+]\\
@AAA @AAA @AAA \\
T \times U_{-}\times T @>>> U_{-} \times T \times U_{-} \times T @>>> U_{-} \ql [U_{-} \times T \times U_{-} \times T]\\
\end{CD}
$$

\noindent
where a calculation on Lie algebras shows that the bottom row is dense in the top row.
An invariant function $f \in \C[G/U_+ \times G/U_+]^{U_{-}}$ satisfies

\begin{equation}
f(u_1t_1, u_2t_2) = f(t_1, u_1^{-1}u_2t_2),\\
\end{equation}

\noindent
so we can realize $\C[G/U_+ \times G/U_+]^{U_{-}}$ as a subalgebra of $R(G)\otimes \C[T].$ 
The algebra $\C[G/U_+ \times G/U_+]^{U_{-}}$ is multigraded by components which classify the multiplicities of invariants in triple tensor products of irreducible representations of $G.$
On the level of vector spaces we have

\begin{equation}
Hom_G(V(\alpha), V(\beta)\otimes V(\gamma)) = Hom_{U_{-}}(\C v_{\alpha}, V(\beta)\otimes V(\gamma)) = V_{\alpha - \beta ,\beta}(\gamma) \otimes \C v_{\beta}\\
\end{equation}

\noindent
Here $V_{\alpha - \beta ,\beta}(\gamma)$ is the subspace of vectors $v \in V(\gamma)$ such that $h_i(v) = \alpha- \beta(h_i) v$ and $e_i^{\beta(h_i) +1} v = 0$ for $h_i$ the basis member of the Cartan subalgebra of $\mathfrak{g}$ and $e_i$ the raising operator in $\mathfrak{g}$ corresponding to the positive root $\alpha_i.$ For this see chapter XVIII, page
383 of \cite{Zh}.  It is also known that this subspace inherits the dual canonical basis.  The algebra $\C[G/U_+ \times G/U_+]^{U_{-}}$
is isomorphic to the branching algebra $\mathfrak{A}(\Delta_2),$ where $\Delta_2: G \to G\times G$ is the diagonal morphism.  The "lower triangular" multiplication property of dual canonical basis therefore also defines monoidal associated graded algebras of this algebra
in the fashion of \cite{AB}.  To our knowledge these have not been studied.

All three subspaces discussed in this example inherit a basis from the dual canonical basis because it satisfies the so-called good basis property, for this see \cite{Ma}.
We believe that the space of graded valuations of $R(G)$ with respect to the dual canonical basis grading should be a useful combinatorial object for the representation theory of $G.$

\section{Acknowledgements}

We thank Benjamin Giraldo and Dave Anderson for useful conversations, and Dave Anderson
for introducing us to  \cite{LM} and \cite{KKh}.

\date{\today}

\end{document}